\newtheorem{theorem}{Theorem}[section]
\newtheorem{corollary}[theorem]{Corollary}
\newtheorem{proposition}[theorem]{Proposition}
\newtheorem{conjecture}{Conjecture}
\newtheorem{remark}{Remark}
\newtheorem{definition}[theorem]{Definition}
\numberwithin{equation}{section}
\tikzstyle{mesh}=[pattern=north east lines, pattern color=gray!70, draw=gray]
\def\oeis#1{\cite[#1]{Sloane}}
\def\abs#1{\lvert#1\rvert}
\def\Av{\mathcal{S}}
\def\A{\mathcal{A}}
\DeclareMathOperator\red{red}
\definecolor{red1}{HTML}{B02400}
\title[A positional statistic for $1324$-avoiding permutations]{A positional statistic for $1324$-avoiding permutations}
\author[J.~Gil, O.~Lopez, M.~Weiner]{Juan B. Gil\affiliationmark{1}
  \and Oscar A. Lopez\affiliationmark{2}
  \and Michael D. Weiner\affiliationmark{1}}
\affiliation{
  Penn State Altoona, Altoona, PA, U.S.A.\\
  Penn State Harrisburg, Middletown, PA, U.S.A.}
\keywords{pattern-avoiding permutations, enumerative combinatorics}
\begin{document}

\publicationdata{vol. 26:1, Permutation Patterns 2023}{2024}{7}{10.46298/dmtcs.12629}{2023-12-01; 2023-12-01; 2024-05-22}{2024-09-17}

\maketitle

\begin{abstract}~

We consider the class $\Av_n(1324)$ of permutations of size $n$ that avoid the pattern 1324 and examine the subset $\Av_n^{a\prec n}(1324)$ of elements for which $a\prec n\prec [a-1]$, $a\ge 1$. This notation means that, when written in one-line notation, such a permutation must have $a$ to the left of $n$, and the elements of $\{1,\dots,a-1\}$ must all be to the right of $n$. For $n\ge 2$, we establish a connection between the subset of permutations in $\Av_n^{1\prec n}(1324)$ having the 1 adjacent to the $n$ (called {\em primitives}), and the set of 1324-avoiding dominoes with $n-2$ points. For $a\in\{1,2\}$, we introduce constructive algorithms and give formulas for the enumeration of $\Av_n^{a\prec n}(1324)$ by the position of $a$ relative to the position of $n$. For $a\ge 3$, we formulate some conjectures for the corresponding generating functions.
\end{abstract}

\section{Introduction}
\label{sec:intro}

Finding a closed formula for the enumeration of the class $\Av_n(1324)$ remains an open problem that we do NOT solve in this paper. However, in our attempt to understand what makes this class so difficult to handle, we came across a simple but interesting type of statistics: distance between the smallest and largest element of a permutation. We found that permutations of size $n$ having the $1$ adjacent to the $n$ are manageable and can be used to enumerate certain related subsets of $\Av_n(1324)$.

The goal of this paper is to present our findings and formulate a conjecture for the enumeration of other subsets of $\Av_n(1324)$ according to similar positional statistics.

For $a,k\ge 1$, let $\Av_{n,k}^{a\prec n}(1324)$ be the set of permutations $\sigma\in \Av_n(1324)$ such that:
\begin{itemize}
\item $\sigma^{-1}(n)-\sigma^{-1}(a)=k$,
\item $\sigma^{-1}(b)-\sigma^{-1}(n)>0$ for every $b\in\{1,\dots,a-1\}$.
\end{itemize}
Let $\Av_{n}^{a\prec n}(1324)=\bigcup\limits_{k\ge 1} \Av_{n,k}^{a\prec n}(1324)$ and observe that, in one-line notation, permutations in the set $\Av_{n}^{a\prec n}(1324)$ have the entry $a$ to the left of $n$ and all the elements of $\{1,\dots,a-1\}$ to the right of $n$. We also let 
\[ T_{a, k}(x)=\sum\limits_{n=k+1}^{\infty}\abs{\Av_{n,k}^{a\prec n}(1324)}x^n \;\text{ and }\; g_a(x,t) 
  = \sum\limits_{k=1}^{\infty}t^kT_{a,k}(x). \]

In Section~\ref{sec:1<n}, we discuss $\Av_{n,k}^{1\prec n}(1324)$ and give formulas for $T_{1,k}(x)$ and $g_1(x,t)$. First, we give a bijection between $\Av_{n,1}^{1\prec n}(1324)$ and the set of 1324-avoiding dominoes with $n-2$ points, which is known to be counted by the OEIS sequence \oeis{A000139}. We then introduce a product $\sigma_1\odot\sigma_2$ on $\Av_{n,1}^{1\prec n}(1324)$ that we use to construct and enumerate the elements of $\Av_{n,k}^{1\prec n}(1324)$. In Section~\ref{sec:2<n}, we examine the set $\Av_{n,k}^{2\prec n}(1324)$ and give explicit formulas for $T_{2,k}(x)$ and $g_2(x,t)$. Finally, in the last section of the paper, we conjecture a formula for $T_{a, k}(x)$ for $3\le a\le k$.

The significance of $g_a(x,t)$ lies in the fact that if $G(x) = \sum\limits_{n=1}^{\infty} |\Av_n(1324)| x^n$, then
\begin{equation*} 
 G(x) = \frac{1}{1-x}\bigg(x+\sum_{a=1}^{\infty}g_a(x,1)\bigg).
\end{equation*}
Note that for $n\ge 2$, permutations in $\Av_n(1324)$ that start with $n$ are counted by the function $xG(x)$. Thus $G(x) = x + xG(x) + \sum\limits_{a=1}^{\infty}g_a(x,1)$.

Again, we are still far from giving a formula for $G(x)$, but the breakdown using the sets $\Av_{n,k}^{a\prec n}(1324)$ provides a different viewpoint that we believe is worth pursuing further.

\section{Enumeration of $\Av_n^{1\prec n}(1324)$}
\label{sec:1<n}

We start by establishing a bijection between $\Av_{n,1}^{1\prec n}(1324)$ and the set of 1324-avoiding dominoes. As studied by D.~Bevan, R.~Brignall, A.~Elvey Price, and J.~Pantone \cite{BBPP20}, a 1324-avoiding vertical {\em domino} is a two-cell gridded permutation in $\textup{Grid}^{\#}\binom{\textup{Av}(213)}{\textup{Av}(132)}$ whose underlying permutation avoids 1324. These dominoes are counted by the sequence $1, 2, 6, 22, 91, 408, 1938, 9614,\dots$, cf.~\oeis{A000139}.

For example, the six distinct $1324$-avoiding dominoes with two points are

\medskip
\begin{center}
\def\rad{0.08}
\begin{tikzpicture}
\begin{scope}
\fill[blue!10] (0,1) rectangle (1,2);
\fill[red!10] (0,0) rectangle (1,1);
\draw[thick, gray] (0,1) -- (1,1);
\draw[very thick] (0,0) rectangle (1,2);
\draw[fill,blue!60!black] (0.3,1.3) circle (\rad); 
\draw[fill,blue!60!black] (0.7,1.65) circle (\rad); 
\end{scope}
\begin{scope}[xshift=50]
\fill[blue!10] (0,1) rectangle (1,2);
\fill[red!10] (0,0) rectangle (1,1);
\draw[thick, gray] (0,1) -- (1,1);
\draw[very thick] (0,0) rectangle (1,2);
\draw[fill,blue!60!black] (0.7,1.3) circle (\rad); 
\draw[fill,blue!60!black] (0.3,1.65) circle (\rad); 
\end{scope}
\begin{scope}[xshift=100]
\fill[blue!10] (0,1) rectangle (1,2);
\fill[red!10] (0,0) rectangle (1,1);
\draw[thick, gray] (0,1) -- (1,1);
\draw[very thick] (0,0) rectangle (1,2);
\draw[fill,blue!60!black] (0.35,1.5) circle (\rad); 
\draw[fill,red!50!black] (0.65,0.5) circle (\rad); 
\end{scope}
\begin{scope}[xshift=150]
\fill[blue!10] (0,1) rectangle (1,2);
\fill[red!10] (0,0) rectangle (1,1);
\draw[thick, gray] (0,1) -- (1,1);
\draw[very thick] (0,0) rectangle (1,2);
\draw[fill,red!50!black] (0.35,0.5) circle (\rad); 
\draw[fill,blue!60!black] (0.65,1.5) circle (\rad); 
\end{scope}
\begin{scope}[xshift=200]
\fill[blue!10] (0,1) rectangle (1,2);
\fill[red!10] (0,0) rectangle (1,1);
\draw[thick, gray] (0,1) -- (1,1);
\draw[very thick] (0,0) rectangle (1,2);
\draw[fill,red!50!black] (0.3,0.3) circle (\rad); 
\draw[fill,red!50!black] (0.7,0.65) circle (\rad); 
\end{scope}
\begin{scope}[xshift=250]
\fill[blue!10] (0,1) rectangle (1,2);
\fill[red!10] (0,0) rectangle (1,1);
\draw[thick, gray] (0,1) -- (1,1);
\draw[very thick] (0,0) rectangle (1,2);
\draw[fill,red!50!black] (0.7,0.3) circle (\rad); 
\draw[fill,red!50!black] (0.3,0.65) circle (\rad); 
\end{scope}
\end{tikzpicture}
\end{center}

\begin{proposition} \label{prop:DominoesBijection}
For $n\ge 2$, there is a one-to-one correspondence between $\Av_{n,1}^{1\prec n}(1324)$ and the set of $1324$-avoiding dominoes with $n-2$ points.
\end{proposition}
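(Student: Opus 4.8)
The plan is to describe both sides structurally and then match them by a single explicit bijection. Write $\sigma\in\Av_{n,1}^{1\prec n}(1324)$ in the normal form $\sigma=\alpha\,1\,n\,\beta$, where $\alpha$ is the (possibly empty) factor to the left of the entry $1$, $\beta$ the factor to the right of the entry $n$, and $\alpha,\beta$ together use the values $\{2,\dots,n-1\}$. The first step is to prove the characterization: $\sigma$ avoids $1324$ iff \emph{(i)} $\alpha$ avoids $132$, \emph{(ii)} $\beta$ avoids $213$, and \emph{(iii)} there is no ``linked pair of ascents'', i.e.\ no positions $i<j$ in $\alpha$ and $k<l$ in $\beta$ with $\alpha_i<\alpha_j$, $\beta_k<\beta_l$ and $\alpha_i<\beta_k<\alpha_j<\beta_l$. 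This is a short case analysis on where the four entries of a hypothetical occurrence of $1324$ sit relative to $1$ and $n$: the entry $1$, being the global minimum, could only be the ``$1$'' of the pattern, but its three larger partners would then lie in $\{n\}\cup\beta$ and (since $1$ and $n$ are adjacent, $n$ cannot be among them) would force a $213$ in $\beta$; once $1$ is not used, $n$ cannot be used either, for otherwise its three smaller partners lie in $\alpha$ and form a $132$ there; so all four entries lie in $\alpha\cup\beta$, and, splitting them between the two factors, the cases with $\ge 3$ entries in $\alpha$ are excluded by \emph{(i)}, the cases with $\le 1$ entry in $\alpha$ by \emph{(ii)}, and the remaining ``two and two'' case is exactly \emph{(iii)}.

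Second, I would record the parallel fact for dominoes: for a gridded permutation whose bottom cell avoids $132$ and top cell avoids $213$, the underlying permutation contains $1324$ iff the domino has a ``linked pair of ascents'' in the same sense — a bottom-cell ascent at columns $p_1<p_3$ together with a top-cell ascent at columns $p_2<p_4$ with $p_1<p_2<p_3<p_4$. Indeed, because the bottom cell carries the smaller values, in any occurrence of $1324$ the two smallest entries must lie in the bottom cell and the two largest in the top cell (any other value-split forces a $132$ below or a $213$ above), and the surviving configuration is precisely such a linked pair. Consequently a $1324$-avoiding domino with $n-2$ points is exactly the following data: a height $h$, a $132$-avoiding pattern for the bottom cell, a $213$-avoiding pattern for the top cell, and a choice of which of the $n-2$ columns are ``bottom'', subject to the absence of a linked pair of ascents.

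The bijection then transposes columns and values. Given $\sigma=\alpha\,1\,n\,\beta$, let $\pi=\red\bigl(\sigma^{-1}(2),\dots,\sigma^{-1}(n-1)\bigr)$ and draw the horizontal grid line at height $h=\abs{\alpha}$; equivalently, the $j$-th point of the domino records the value $j+1$ of $\sigma$, it is a bottom point iff $j+1$ appears in $\alpha$, and its height is the rank of $\sigma^{-1}(j+1)$. Since the entries of $\alpha$ occupy the leftmost $\abs{\alpha}$ positions of $\sigma$, the columns $j$ with $\pi(j)\le\abs{\alpha}$ are exactly the bottom ones, so the gridding is legitimate; the bottom-cell pattern is $\alpha^{-1}$ and the top-cell pattern is $\beta^{-1}$, which still avoid $132$ and $213$ respectively because each of $132$ and $213$ is self-inverse as a permutation; and under the identification of column $p$ with value $p+1$, a linked pair of ascents of the domino matches exactly one of $\sigma$ in the sense of \emph{(iii)}. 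Hence $\sigma$ avoids $1324$ iff the object just built is a genuine $1324$-avoiding domino. The inverse map reads $h$, the set of bottom columns, and the two cell patterns off a given domino, sets $A=\{\,j+1: j\text{ a bottom column}\,\}$, takes $\alpha$ (resp.\ $\beta$) to be the inverse of the bottom (resp.\ top) pattern placed on $A$ (resp.\ on $\{2,\dots,n-1\}\setminus A$), and returns $\alpha\,1\,n\,\beta$; checking the two composites are the identity is direct. I expect the only genuinely delicate point to be getting conditions \emph{(i)}--\emph{(iii)} to coincide with the domino conditions on the nose — this is where the column/value transpose, together with $132$ and $213$ being self-inverse (so that inverting a cell keeps it in the prescribed class), does the real work; the remaining verifications are bookkeeping.
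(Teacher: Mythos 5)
Your proof is correct and rests on the same underlying bijection as the paper's: pass to the inverse permutation (i.e.\ transpose columns and values), delete the two distinguished points coming from the adjacent pair $1\,n$, and use the gap between their heights as the grid line of the domino. You additionally verify the ``avoids $1324$'' equivalence in both directions via the linked-pair-of-ascents characterization, a detail the paper's proof leaves implicit, but the approach is essentially identical.
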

\begin{proof}
The bijection relies on the inverse map. Every $\sigma\in \Av_{n,1}^{1\prec n}(1324)$ is of the form 
\[ \sigma = \sigma_L \,1 n\, \sigma_R, \]
where $\sigma_L$ and $\sigma_R$ are words (possibly empty) such that $|\sigma_L|+|\sigma_R|=n-2$, and their reduced permutations\footnote{The permutation $\red(\sigma)$ is obtained by replacing the $i$th smallest letter of $\sigma$ by $i$, for $i=1,\dots,|\sigma|$.} $\red(\sigma_L)$ and $\red(\sigma_R)$ avoid 132 and 213, respectively. Clearly, since $1324$ is an involution, $\sigma^{-1}$ also avoids 1324. Moreover, if $i=\sigma^{-1}(1)$, then $\sigma^{-1}$ is of the form
\begin{center}
\begin{tikzpicture}[scale=0.7]
\def\a{1.35}
\def\b{1.65}
\node[left=2] at (0,\a) {\small $i$};
\node[right=2] at (3,\b) {\small $i+1$};
\draw[fill,blue!10] (0,\b) rectangle (3,3);
\draw[fill,red!10] (0,0) rectangle (3,\a);
\draw[mesh] (0,\a) rectangle (3,\b);
\draw[gray] (0,0) rectangle (3,3);
\draw[gray] (0,\a) -- (3,\a);
\draw[gray] (0,\b) -- (3,\b);
\draw[fill] (0,\a) circle (0.1);
\draw[fill] (3,\b) circle (0.1);
\node[above=0] at (1.5,2) {\small $\sigma_T$};
\node[below=0] at (1.5,1) {\small $\sigma_B$};
\end{tikzpicture}
\end{center}
where $\red(\sigma_T)=\red(\sigma_R)^{-1}$ avoids 213 and $\red(\sigma_B)=\red(\sigma_L)^{-1}$ avoids 132. Finally, merging the lines through $i$ and $i+1$ as a separator, we get a 1324-avoiding domino.
\end{proof}

As a consequence (\cite[Theorem~2]{BBPP20} with $n$ replaced by $n-2$), we have
\[ \abs{\Av_{n,1}^{1\prec n}(1324)} = \frac{2\,(3n-3)!}{(2n-1)! n!} \;\text{ for } n\ge 2. \]

The elements of $\Av_{n,1}^{1\prec n}(1324)$ serve as primitives to factor and enumerate the elements of $\Av_{n,k}^{1\prec n}(1324)$ for every $k\ge 2$.

\medskip
We start with a definition.
\begin{definition}
A permutation $\sigma\in \Av_{n,1}^{1\prec n}(1324)$ will be called a {\em primitive}. For $n\ge 2$, such a permutation 
must be of the form $\sigma=\pi\,1 n\,\tau$ with $\red(\pi)\in \Av_k(132)$, $\red(\tau)\in \Av_\ell(213)$, and $k+\ell=n-2$. Given a primitive $\sigma_1=\pi_1\,1 m\,\tau_1\in \Av_m(1324)$ and a permutation $\sigma_2\in \Av_\ell(1324)$ of the form $\sigma_2=\pi_2\,1\,\theta_2\,\ell\,\tau_2$ with $|\theta_2|\ge 0$, we define the product
\begin{equation}\label{eq:dotProduct}
 \sigma_1\odot \sigma_2 = \widehat{\pi}_2\pi_1\,1m\,\widehat{\theta}_2\,n\,\widehat{\tau}_2\tau_1 \in \Av_n,
\end{equation}
where $n=\ell+m-1$, and $\widehat{\pi}_2$, $\widehat{\theta}_2$, and $\widehat{\tau}_2$ are obtained from $\pi_2$, $\theta_2$, and $\tau_2$, by increasing all of their entries by $m-1$ (see Figure~\ref{fig:dotProduct}). 
\end{definition}
\begin{figure}[ht]
\centering
\begin{tikzpicture}[scale=1.2]
\tikzstyle{pBox}=[fill,opacity=0.4]
\draw[mesh] (2,0.25) rectangle (2.25,3.25);
\draw[gray!60] (0,0.25) rectangle (5.5,3.25);
\draw[gray!60, thick] (0,1.75)--(5.5,1.75); 
\foreach \x in {2,2.25,3.5}{\draw[gray!60] (\x,0.25)--(\x,3.25);}
\draw[pBox,brown] (0,2) rectangle (0.75,3); 
\draw[pBox,olive] (1,0.5) rectangle (1.75,1.5); 
\draw[pBox,brown] (2.5,2) rectangle (3.25,3); 
\draw[pBox,brown] (3.75,2) rectangle (4.5,3); 
\draw[pBox,olive] (4.75,0.5) rectangle (5.5,1.5); 
\node at (0.38,2.5) {$\hat{\pi}_2$};
\node at (1.38,1) {$\pi_1$};
\node at (2.88,2.5) {$\hat{\theta}_2$};
\node at (4.13,2.5) {$\hat{\tau}_2$};
\node at (5.13,1) {$\tau_1$};
\draw[fill,olive] (2,0.25) circle(0.06);
\draw[fill] (2.25,1.75) circle(0.06);
\draw[fill,brown] (3.5,3.25) circle(0.06);
\node[below] at (2,0.25) {\small $1$};
\node[above] at (2.25,1.75) {\small $m$};
\node[above] at (3.5,3.25) {\small $n$};
\end{tikzpicture}
\caption{Visualization of $\sigma_1\odot\sigma_2$.}
\label{fig:dotProduct}
\end{figure}
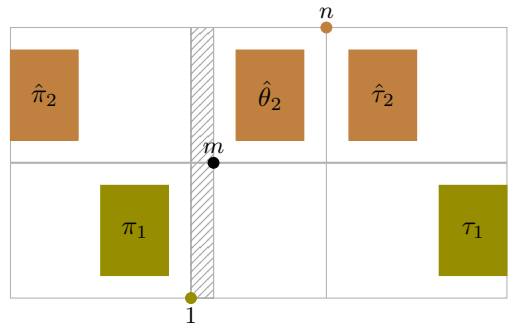

For instance, $2143\odot 41253 = 7\,21\, 4\,586\,3$. Also, 
\[ 213\odot 3142=5\, 21\, 364 \;\text{ and }\; 3142\odot 213= 5\, 31\, 46\, 2. \] 
In particular, $\odot$ is not commutative. This is true even if $\sigma_1$ and $\sigma_2$ are both primitives. For example, $213\odot 12 = 2134$ but $12\odot 213 = 3124$.

\begin{proposition}
If $\sigma_1 \in \Av_{m,1}^{1\prec m}(1324)$ and $\sigma_2 \in \Av_{\ell,k}^{1\prec \ell}(1324)$, then
\[ \sigma_1\odot \sigma_2 \in \Av_{n,k+1}^{1\prec n}(1324) \text{ with } n=\ell+m-1.\]
\end{proposition}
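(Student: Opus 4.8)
The plan is to verify the two defining conditions of $\Av_{n,k+1}^{1\prec n}(1324)$ for $a=1$ for the permutation $\sigma:=\sigma_1\odot\sigma_2$: that it avoids $1324$ and that $\sigma^{-1}(n)-\sigma^{-1}(1)=k+1$ (the condition involving $\{1,\dots,a-1\}$ being vacuous when $a=1$). The bookkeeping is quick: writing $\sigma_2=\pi_2\,1\,\theta_2\,\ell\,\tau_2$, the hypothesis $\sigma_2\in\Av_{\ell,k}^{1\prec\ell}(1324)$ forces $|\theta_2|=k-1$; a count of values shows that $\sigma$ is a permutation of $[n]$ with $n=\ell+m-1$; and reading off the positions of the entries $1$ and $n$ in $\widehat{\pi}_2\pi_1\,1m\,\widehat{\theta}_2\,n\,\widehat{\tau}_2\tau_1$ gives $\sigma^{-1}(n)-\sigma^{-1}(1)=2+|\theta_2|=k+1$. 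So the whole proof reduces to showing that $\sigma$ avoids $1324$.

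For that I would first record the relevant structure. Call an entry of $\sigma$ \emph{low} if its value is at most $m$ and \emph{high} otherwise. By construction, the low entries of $\sigma$ read from left to right form exactly the primitive $\sigma_1=\pi_1\,1m\,\tau_1$, while the high entries form $\widehat{\pi}_2\widehat{\theta}_2\,n\,\widehat{\tau}_2$, which is $\pi_2\,\theta_2\,\ell\,\tau_2$ with every entry raised by $m-1$ — that is, a relabelling of the permutation obtained from $\sigma_2$ by deleting its minimum. Hence both the low subword and the high subword avoid $1324$ — the former being $\sigma_1$, the latter a pattern of $\sigma_2$. Two finer facts come for free and are exactly what the argument needs: $\pi_1\,1m$ avoids $132$ (because $\red(\pi_1)$ does, and the entries $1$ and $m$, occupying the last two of those positions, are too extreme to take part in a $132$), and $\widehat{\theta}_2\,n\,\widehat{\tau}_2$ avoids $213$ (an occurrence there, prefixed by the entry $1$ of $\sigma_2$ — which precedes $\theta_2\,\ell\,\tau_2$ — would be a $1324$ in $\sigma_2$). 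Finally, the left-to-right block layout of $\sigma$ is rigid: $\widehat{\pi}_2$, then $\pi_1$, then $1$, then $m$, then $\widehat{\theta}_2$, then $n$, then $\widehat{\tau}_2$, then $\tau_1$; so the only low positions that have some high position to their right are those inside $\pi_1$ together with the entries $1$ and $m$, and nothing lies to the right of $\tau_1$.

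Then I would argue by contradiction. Suppose $\sigma$ has a $1324$ occurrence at positions $i_1<i_2<i_3<i_4$, i.e.\ $\sigma(i_1)<\sigma(i_3)<\sigma(i_2)<\sigma(i_4)$. If $\sigma(i_1)>m$, all four entries are high and the high subword contains $1324$; if $\sigma(i_4)\le m$, all four are low and $\sigma_1$ contains $1324$. So $\sigma(i_1)\le m<\sigma(i_4)$, and I split on whether $i_2$ and $i_3$ are low or high. The case $i_2$ low, $i_3$ high is impossible at once, since it would force $\sigma(i_2)\le m<\sigma(i_3)$, contradicting $\sigma(i_3)<\sigma(i_2)$. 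If $i_2,i_3$ are both high, then $i_2,i_3,i_4$ form a $213$ among high entries; as $i_1$ is low and lies to the left of $i_2$, the position $i_2$ cannot be in the opening block $\widehat{\pi}_2$, so $i_2,i_3,i_4$ all lie in $\widehat{\theta}_2\,n\,\widehat{\tau}_2$, contradicting that this word avoids $213$. If $i_2$ is high and $i_3$ low, the same observation puts $i_2$ in $\widehat{\theta}_2\,n\,\widehat{\tau}_2$, which forces the low position $i_3$ into $\tau_1$, leaving no high position available for $i_4$ to its right. Lastly, if $i_2,i_3$ are both low, then $i_1,i_2,i_3$ form a $132$ among low entries; since $i_4$ is a high position to the right of $i_3$, the position $i_3$, and hence $i_1$ and $i_2$ as well, must lie inside $\pi_1$ or be one of the entries $1,m$, so the $132$ occurs inside $\pi_1\,1m$, which we noted avoids $132$. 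Every case is contradictory, so $\sigma$ avoids $1324$.

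The only real difficulty is organizational: lining up the case split on $(i_2,i_3)$ with the rigid block structure of $\sigma$, and noticing that the two ``free'' avoidance facts — $\pi_1\,1m$ avoiding $132$ and $\widehat{\theta}_2\,n\,\widehat{\tau}_2$ avoiding $213$ — are precisely what the two surviving cases consume. The one genuinely substantive idea is the observation that the high entries of $\sigma$ reproduce $\sigma_2$ with its minimum removed, which is what lets the ``both high'' case be settled by the $1324$-avoidance of $\sigma_2$.
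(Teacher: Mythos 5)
Your proof is correct and follows essentially the same route as the paper's: both reduce to the observation that the low and high subwords individually avoid $1324$ (being $\sigma_1$ and a pattern of $\sigma_2$), and both dispatch the crossing patterns using the $132$-avoidance of $\pi_1$ and the $213$-avoidance of $\theta_2\,\ell\,\tau_2$. Your write-up is simply a more carefully cased-out version — in particular you explicitly handle the mixed case ($i_2$ high, $i_3$ low) and the positional bookkeeping, both of which the paper compresses into a sentence and a reference to the figure.
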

\begin{proof}
The permutations $\sigma_1$ and $\sigma_2$ must be of the form 
\[ \sigma_1=\pi_1\,1m\,\tau_1 \;\text{ and }\; \sigma_2=\pi_2\,1\theta_2\,\ell\,\tau_2, \]
where $\red(\pi_1)$ and $\red(\pi_2)$ both avoid 132, $\red(\tau_1)$ and $\red(\tau_2)$ both avoid 213, $\red(\theta_2)$ avoids $21$, and $|\theta_2|=k-1$. By \eqref{eq:dotProduct}, we have $\sigma_1\odot \sigma_2\in\Av_{n,k+1}^{1\prec n}$, so we only need to show that $\sigma_1\odot \sigma_2$ avoids 1324.

The graph of $\sigma_1\odot \sigma_2$ (see Figure~\ref{fig:dotProduct}) makes it clear that, since both $\sigma_1$ and $\sigma_2$ avoid 1324, the parts of the permutation below and above the horizontal line $y=m$ cannot have a 1324 pattern. Thus, if there is a 1324 pattern in $\sigma_1\odot \sigma_2$, the 1 will have to be in $\pi_1\,1$, and since $\pi_1$ avoids 132, the 3, 2, and 4 of the pattern will have to be above the line $y=m$. But this is not possible since $\theta_2\,\ell\,\tau_2$ avoids 213.
\end{proof}

\begin{proposition}
Every non-primitive $\sigma\in \Av_n^{1\prec n}(1324)$ admits a unique decomposition 
\[ \sigma=\sigma_1\odot \sigma_2, \] 
where $\sigma_1$ is a primitive in $\Av_{m,1}^{1\prec m}(1324)$ and $\sigma_2\in\Av_\ell^{1\prec\ell}(1324)$ with $\ell=n-m+1$.
\end{proposition}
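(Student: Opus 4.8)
The plan is to construct the decomposition explicitly from the one-line notation of $\sigma$ and then argue that it is forced. Put $i=\sigma^{-1}(1)$ and $j=\sigma^{-1}(n)$; since $1\prec n$ we have $i<j$, and non-primitivity gives $j\ge i+2$, so position $i+1$ exists and carries a value $m:=\sigma(i+1)$ with $2\le m\le n-1$. I would then take $\sigma_1$ to be the pattern of the subsequence of $\sigma$ formed by the entries of value at most $m$ (so $\abs{\sigma_1}=m$), and $\sigma_2$ to be the pattern of the subsequence of $\sigma$ formed by the entry in position $i$ together with all entries of value greater than $m$ (so $\abs{\sigma_2}=n-m+1=:\ell$). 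Uniqueness is then quick: by \eqref{eq:dotProduct} the entry immediately following the $1$ in any product $\sigma_1'\odot\sigma_2'$ is the largest entry of $\sigma_1'$, namely $\abs{\sigma_1'}$; this forces $\abs{\sigma_1'}=m$, and then the two factors are recovered as precisely the subsequences just described.

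The heart of the proof is a normal form for $\sigma$, which I would derive from three forced occurrences of $1324$. First, the block $\beta=\sigma(i+1)\cdots\sigma(j-1)$ strictly between the $1$ and the $n$ is increasing, since a descent $\sigma(p)>\sigma(q)$ with $i<p<q<j$ would make $\sigma(i),\sigma(p),\sigma(q),\sigma(j)$ a $1324$ pattern; hence every entry of $\beta$ is $\ge m$, and so the values $2,\dots,m-1$ all occur before position $i$ or after position $j$. Second, among the entries before position $i$ no value smaller than $m$ is followed by one larger than $m$: a pair $v<m<w$ in that order, together with $m$ at position $i+1$ and $n$ at position $j$, would form a $1324$ pattern. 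Third, and symmetrically, among the entries after position $j$ no value smaller than $m$ is followed by one larger than $m$: a pair $v<m<w$ in that order, together with the $1$ at position $i$ and $m$ at position $i+1$, would form a $1324$ pattern. The last two facts say that on each side of the ``$1\,m$'' block the entries of value $>m$ precede those of value $<m$, so
\[ \sigma=\,[{>}m]\,[{<}m]\;1\;m\;[{>}m]\;n\;[{>}m]\,[{<}m], \]
where $[{>}m]$ and $[{<}m]$ denote the subwords of $\sigma$ consisting of the entries above, respectively below, $m$ in the indicated region.

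It then remains to match this normal form with \eqref{eq:dotProduct}. As a subsequence of the $1324$-avoider $\sigma$, the word $\sigma_1$ avoids $1324$, and its $1$ sits immediately to the left of its maximum $m$, so $\sigma_1\in\Av_{m,1}^{1\prec m}(1324)$ is a primitive, say $\sigma_1=\pi_1\,1m\,\tau_1$; likewise $\sigma_2$ avoids $1324$ and its $1$ (from position $i$) lies to the left of its maximum (from position $j$), so $\sigma_2\in\Av_\ell^{1\prec\ell}(1324)$, say $\sigma_2=\pi_2\,1\,\theta_2\,\ell\,\tau_2$. Comparing the displayed normal form with \eqref{eq:dotProduct} block by block — the three ``small/boundary'' blocks are $\pi_1,\,1,\,m,\,\tau_1$, while the three $[{>}m]$ blocks, after subtracting $m-1$ from every entry, are $\pi_2$ (before the $1$), $\theta_2$ (the interior of $\beta$), and $\tau_2$ (after the $n$) — gives $\sigma_1\odot\sigma_2=\sigma$.

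The step I expect to be the real obstacle is this normal form: recognizing $m=\sigma(i+1)$ as the correct pivot and pinning down the three specific $1324$ patterns that push every entry of value $>m$ to the left of every entry of value $<m$ on each side of ``$1\,m$''. Once the normal form is available, identifying the two factors, checking $\sigma_1\odot\sigma_2=\sigma$, and proving uniqueness are all routine bookkeeping.
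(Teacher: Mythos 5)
Your proof is correct and follows essentially the same route as the paper: you identify the same pivot $m$ (the paper takes $m=\min(\theta)$ for the increasing block $\theta$ between $1$ and $n$, which equals your $\sigma(i+1)$), force the same block structure via the same two $1324$ occurrences $(v,w,m,n)$ and $(1,m,v,w)$, and prove uniqueness by observing that the entry following the $1$ in any product must be $\abs{\sigma_1}$. The only difference is cosmetic: you additionally derive the increasingness of the middle block from scratch, which the paper takes as given from the structure of $\Av_n^{1\prec n}(1324)$.
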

\begin{proof}
If $\sigma$ is not primitive, then it must be of the form $\sigma=\pi\,1\theta n\,\tau$, where $\red(\pi)$ avoids 132, $\red(\tau)$ avoids 213, and $\theta$ is increasing (i.e., avoids 21). Let $m=\min(\theta)$. If there were indices $i<j$ such that $\pi(i)<m<\pi(j)$, then the sequence $(\pi(i),\pi(j),m,n)$ would make a forbidden 1324 pattern:
\begin{center}
\begin{tikzpicture}[scale=0.55]
\node[below] at (1,0.5) {\small $i$};
\node[below] at (2,0.5) {\small $j$};
\clip (0.5,0.5) rectangle (4.5,4.5);
\draw[gray!60] (0,0) grid (5,4);
\foreach \x/\y in {1/1,2/3,3/2,4/4}{\draw[fill] (\x,\y) circle (0.12);}
\node[above] at (1,1) {\small $\pi(i)$};
\node[above] at (2,3) {\small $\pi(j)$};
\node[above] at (3,2) {\small $m$};
\node[above] at (4,4) {\small $n$};
\end{tikzpicture}
\end{center}
Thus, if $\pi$ is not empty, it must be of the form $\pi=\pi_2\pi_1$ where $\pi_2$ consists of the values of $\pi$ that are larger than $m$, and $\pi_2$ contains the values of $\pi$ smaller than $m$, if any. Similarly, if $\tau(i)<m<\tau(j)$ for some $i<j$, then the sequence $(1,m,\tau(i),\tau(j))$ would make a forbidden 1324 pattern. Thus, if $\tau$ is not empty, it must be of the form $\tau=\tau_2\tau_1$ where the values of $\tau_2$ (if not empty) are larger than $m$ and the values of $\tau_1$ are smaller than $m$.

Now, if $\theta'$ denotes $\theta$ without the entry $m$, then the graph of $\sigma$ takes the form
\begin{center}
\begin{tikzpicture}[scale=1.1]
\draw[mesh] (2,0.25) rectangle (2.25,3.25);
\draw[gray!60] (0,0.25) rectangle (5.5,3.25);
\draw[gray!60] (0,1.75)--(5.5,1.75); 
\foreach \x in {2,2.25,3.5}{\draw[gray!60] (\x,0.25)--(\x,3.25);}
\draw[thick] (0,2) rectangle (0.75,3); 
\draw[thick] (1,0.5) rectangle (1.75,1.5); 
\draw[thick] (2.5,2) rectangle (3.25,3); 
\draw[thick] (3.75,2) rectangle (4.5,3); 
\draw[thick] (4.75,0.5) rectangle (5.5,1.5); 
\node at (0.38,2.5) {$\pi_2$};
\node at (1.38,1) {$\pi_1$};
\node at (2.88,2.5) {$\theta'$};
\node at (4.13,2.5) {$\tau_2$};
\node at (5.13,1) {$\tau_1$};
\draw[fill] (2,0.25) circle(0.06);
\draw[fill] (2.25,1.75) circle(0.06);
\draw[fill] (3.5,3.25) circle(0.06);
\node[below] at (2,0.25) {\small $1$};
\node[above] at (2.25,1.75) {\small $m$};
\node[above] at (3.5,3.25) {\small $n$};
\end{tikzpicture}
\end{center}
where any box could be empty. If we let $\sigma_1=\pi_1\,1m\,\tau_1$ and $\sigma_2=\red(\pi_2\,m \theta' n\,\tau_2)$, then $\sigma_1$ is primitive, $\sigma_2\in\Av_\ell^{1\prec\ell}(1324)$ with $\ell=n-m+1$, and it is easy to see that $\sigma=\sigma_1\odot\sigma_2$.

Now suppose $\sigma=\rho_1\odot\rho_2$ with a primitive $\rho_1$ of the form $\rho_1=\alpha_1\, 1m'\,\beta_1$. The definition of $\rho_1\odot\rho_2$ implies $m'=m$ and all the values of $\sigma$ less than or equal to $m$ must coincide with the values of $\rho_1$. Hence $\rho_1=\sigma_1$ and the factorization is unique.
\end{proof}

\begin{corollary} \label{cor:1<n}
Every permutation in $\Av_{n,k}^{1\prec n}(1324)$ can be uniquely decomposed as a product of $k$ primitive permutations. 
\end{corollary}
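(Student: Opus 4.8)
The plan is to prove this by induction on $k$, since the three propositions just established already contain all the real content and what remains is bookkeeping. By a \emph{product of $k$ primitives} I will mean a right-associated product $\rho_1\odot(\rho_2\odot(\cdots\odot\rho_k))$: this is the shape in which the decomposition proposition, applied repeatedly, delivers the factors, so it is the natural convention given that $\odot$ is not associative.

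The base case $k=1$ is immediate: $\Av_{n,1}^{1\prec n}(1324)$ is by definition the set of primitives, so each of its elements is (uniquely) a product of the single primitive equal to itself. For the inductive step, take $k\ge 2$ and $\sigma\in\Av_{n,k}^{1\prec n}(1324)$. Since $\sigma^{-1}(n)-\sigma^{-1}(1)=k\ge 2$, the entry $1$ is not adjacent to $n$, so $\sigma$ is not primitive; the decomposition proposition then yields a unique factorization $\sigma=\sigma_1\odot\sigma_2$ with $\sigma_1$ primitive in $\Av_{m,1}^{1\prec m}(1324)$ and $\sigma_2\in\Av_\ell^{1\prec\ell}(1324)$, $\ell=n-m+1$. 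Writing $\sigma_2\in\Av_{\ell,j}^{1\prec\ell}(1324)$, the product proposition gives $\sigma=\sigma_1\odot\sigma_2\in\Av_{n,j+1}^{1\prec n}(1324)$, and since the lower index is determined by the permutation itself, $j=k-1$. Applying the induction hypothesis to $\sigma_2$ writes it uniquely as a product of $k-1$ primitives, and prepending $\sigma_1\odot$ exhibits $\sigma$ as a product of $k$ primitives. For uniqueness, given any expression $\sigma=\rho_1\odot(\rho_2\odot\cdots\odot\rho_k)$ with the $\rho_i$ primitive, a short induction using the product proposition shows the inner right-associated product lies in $\Av_{\ell'}^{1\prec\ell'}(1324)$ for the appropriate $\ell'$; hence $\sigma=\rho_1\odot(\text{that inner product})$ is a factorization of the type governed by the decomposition proposition, forcing $\rho_1=\sigma_1$ and $\rho_2\odot\cdots\odot\rho_k=\sigma_2$, after which the induction hypothesis pins down the remaining factors.

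I do not expect a genuine obstacle here: the heavy lifting was done in proving that $\odot$ lands in the correct set while raising the lower index by exactly one (the product proposition) and that every non-primitive peels off exactly one primitive on the left (the decomposition proposition). The only point requiring care is keeping the non-associative product consistently right-associated and tracking how the index adds up — essentially verifying that a right-associated product of $k$ primitives of sizes $m_1,\dots,m_k\ge 2$ lands in $\Av_{n,k}^{1\prec n}(1324)$ with $n=\sum_i m_i-(k-1)$, which in passing also recovers the constraint $n\ge k+1$ implicit in the definition of $T_{1,k}(x)$.
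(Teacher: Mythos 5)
Your proof is correct and follows the same route the paper intends: the corollary is the iterated form of the two preceding propositions, obtained by peeling off one primitive at a time via the (unique) factorization $\sigma=\sigma_1\odot\sigma_2$ and inducting on $k$, with the product proposition guaranteeing the index drops by exactly one. Your explicit attention to right-associating the non-associative product $\odot$ is a reasonable precision the paper leaves implicit.
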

\begin{table}[ht]
\begin{align*}
(k=3)\qquad & 1234 = 12\odot 12\odot 12 \\[3pt]
(k=2)\qquad & 1243 = 12\odot 132 \\
 & 1342 = 132\odot 12 \\
 & 2134 = 213\odot 12 \\
 & 3124 = 12\odot 213\\[3pt]
(k=1)\qquad & 1423, 1432, 2143, 3142, 2314, 3214 \; \text{ (primitives)}
\end{align*}
\caption{Elements of $\Av_{4,k}^{1\prec 4}(1324)$ and their primitive decomposition.} 
\label{tab:composite_n=4}
\end{table}

In general, every permutation $\sigma\in\Av_{n,k}^{1\prec n}(1324)$ with $k$ primitive components is of the form depicted in Figure~\ref{fig:k-dotProduct}, where $\Theta$ is increasing (possibly empty), and each primitive component can be read from the horizontal regions determined by the values of $\sigma$ to the right of $1$ and to the left of $n$.

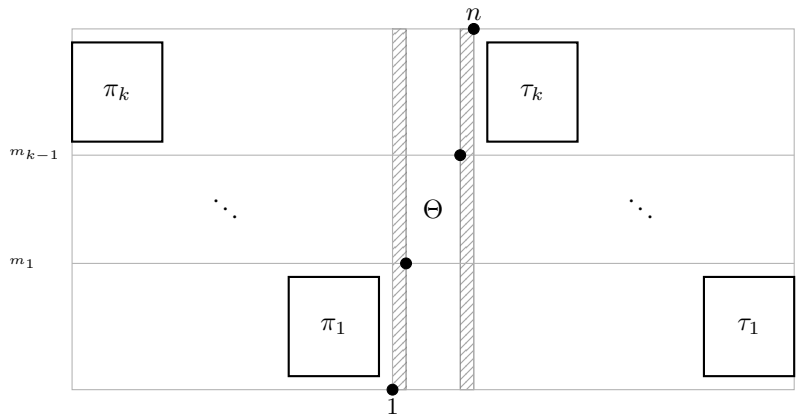
\begin{figure}[ht]
\centering
\begin{tikzpicture}[scale=1.2]
\draw[mesh] (3.55,0) rectangle (3.7,4);
\draw[mesh] (4.3,0) rectangle (4.45,4);
\draw[gray!60] (0,0) rectangle (8,4);
\foreach \y in {1.4,2.6}{\draw[gray!60] (0,\y)--(8,\y);}
\foreach \x in {3.55,4.45}{\draw[gray!60] (\x,0)--(\x,4);}
\draw[thick] (0,2.75) rectangle (1,3.85); 
\draw[thick] (2.4,0.15) rectangle (3.4,1.25); 
\draw[thick] (4.6,2.75) rectangle (5.6,3.85); 
\draw[thick] (7,0.15) rectangle (8,1.25); 
\node at (0.5,3.3) {$\pi_k$};
\node at (1.7,2.1) {$\ddots$};
\node at (2.9,0.7) {$\pi_1$};
\node at (4,2) {$\Theta$};
\node at (5.1,3.3) {$\tau_k$};
\node at (6.3,2.1) {$\ddots$};
\node at (7.5,0.7) {$\tau_1$};
\draw[fill] (3.55,0) circle(0.06);
\draw[fill] (3.7,1.4) circle(0.06);
\draw[fill] (4.3,2.6) circle(0.06);
\draw[fill] (4.45,4) circle(0.06);
\node[below] at (3.55,0) {\small $1$};
\node[right] at (-0.8,1.4) {\tiny $m_1$};
\node[right] at (-0.8,2.6) {\tiny $m_{k-1}$};
\node[above] at (4.45,4) {$n$};
\end{tikzpicture}
\caption{$\sigma = \sigma_1\odot\cdots\odot\sigma_k$.}
\label{fig:k-dotProduct}
\end{figure}

\medskip
As mentioned at the beginning of this section, the elements of $\Av_{n,1}^{1\prec n}(1324)$ are counted by the OEIS sequence \oeis{A000139}. Let $f(x)$ denote the generating function for A000139 without the constant term, that is,
\begin{equation}\label{eq:A139(x)} 
  f(x) = x + 2x^2 + 6x^3 + 22x^4 + 91x^5 + 408x^6 + 1938x^7 + 9614x^8 +\cdots,
\end{equation}
and recall the notation (given in the introduction)
\[  T_{1,k}(x)=\sum\limits_{n=k+1}^{\infty}\,\abs{\Av_{n,k}^{1\prec n}(1324)}x^n \;\text{ and }\; g_1(x,t) = \sum\limits_{k=1}^{\infty}t^kT_{1,k}(x). \]
By Proposition~\ref{prop:DominoesBijection}, we have $T_{1,1}(x) = xf(x)$. More generally:

\begin{theorem} \label{thm:1<n}
Let $a_{n,k} = \abs{\Av_{n,k}^{1\prec n}(1324)}$. For $n\ge 3$ and $2\le k\le n-1$, we have
\[ a_{n,k} = \sum_{m=2}^{n-k+1} a_{m,1}\cdot a_{n-m+1,k-1}. \]
Consequently, $T_{1,k}(x) = xf(x)^k$ and thus $g_1(x,t) = \dfrac{xtf(x)}{1-tf(x)}$.
\end{theorem}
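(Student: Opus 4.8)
The plan is to convert Corollary~\ref{cor:1<n} (together with the two preceding propositions) into the stated recursion, and then read off the generating functions by summing geometric series. Fix $k\ge 2$ and $n\ge 3$. A permutation in $\Av_{n,k}^{1\prec n}(1324)$ is non-primitive, so by the unique-decomposition proposition it factors uniquely as $\sigma=\sigma_1\odot\sigma_2$ with $\sigma_1$ a primitive in $\Av_{m,1}^{1\prec m}(1324)$ and $\sigma_2\in\Av_\ell^{1\prec\ell}(1324)$, where $\ell=n-m+1$. The first thing I would check carefully is that this two-factor decomposition is compatible with the count of primitive components: by Corollary~\ref{cor:1<n}, $\sigma$ has a unique factorization $\sigma=\sigma_1\odot\cdots\odot\sigma_k$ into primitives, and regrouping it as $\sigma_1\odot(\sigma_2\odot\cdots\odot\sigma_k)$ and invoking uniqueness of the two-factor decomposition identifies the second factor with the size-$\ell$ permutation having exactly $k-1$ primitive components. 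Hence $\sigma\mapsto(\sigma_1,\sigma_2)$ is a bijection from $\Av_{n,k}^{1\prec n}(1324)$ onto pairs consisting of a primitive $\sigma_1\in\Av_{m,1}^{1\prec m}(1324)$ and an element $\sigma_2\in\Av_{n-m+1,\,k-1}^{1\prec n-m+1}(1324)$.

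Next I would pin down the range of $m$. Since $\sigma_1$ is a primitive it has size $m\ge 2$; and since $\sigma_2$ has $k-1$ primitive components it has size $n-m+1\ge k$, forcing $m\le n-k+1$ (outside this range the relevant set is empty). Summing the bijection over $m$ in this range and over all primitives of size $m$ yields
\[ a_{n,k}=\sum_{m=2}^{n-k+1} a_{m,1}\, a_{n-m+1,\,k-1}. \]

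To pass to generating functions, multiply by $x^n$ and sum over $n\ge k+1$; interchanging the order of summation and substituting $j=n-m+1$ (so $n=j+m-1$ and $j\ge k$) gives
\[ T_{1,k}(x)=\sum_{m\ge 2}a_{m,1}\,x^{m-1}\sum_{j\ge k}a_{j,k-1}\,x^{j}=\frac{1}{x}\,T_{1,1}(x)\,T_{1,k-1}(x). \]
Since $T_{1,1}(x)=xf(x)$ by Proposition~\ref{prop:DominoesBijection}, this recursion reads $T_{1,k}(x)=f(x)\,T_{1,k-1}(x)$, so induction on $k$ gives $T_{1,k}(x)=f(x)^{k-1}T_{1,1}(x)=xf(x)^{k}$. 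Finally,
\[ g_1(x,t)=\sum_{k\ge 1}t^{k}T_{1,k}(x)=x\sum_{k\ge 1}\bigl(tf(x)\bigr)^{k}=\frac{xtf(x)}{1-tf(x)}. \]

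Given the machinery already in place, the combinatorial core is essentially immediate from Corollary~\ref{cor:1<n}; the only genuinely delicate points are verifying that the two-factor decomposition respects the number of primitive components \emph{exactly} (rather than merely as an inequality) — this is the step I would write out most carefully, using associativity and uniqueness of the $\odot$-factorization — and correctly handling the index bookkeeping, in particular the upper limit $m\le n-k+1$ arising from the size constraint $\ell\ge k$ on the second factor and the shift $\ell=n-m+1$, which is what produces the factor $1/x$ in the generating-function identity. Everything else is a routine geometric-series computation.
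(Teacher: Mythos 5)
Your proposal is correct and follows essentially the same route as the paper: the unique factorization $\sigma=\sigma_1\odot\sigma_2$ into a primitive and a shorter element with statistic $k-1$ gives the convolution formula, and the generating-function identity $xT_{1,k}=T_{1,1}T_{1,k-1}$ plus induction on $k$ yields $T_{1,k}(x)=xf(x)^k$ and the geometric series for $g_1$. The one "delicate point" you flag is in fact handled directly by the paper's earlier proposition, which shows the $\odot$-product of a primitive with an element of statistic $k-1$ has statistic exactly $k$, so no appeal to associativity of $\odot$ is needed.
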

\begin{proof}
By the above propositions, the permutations in $\Av_{n,k}^{1\prec n}(1324)$ can be built by taking all possible products $\sigma_1\odot\sigma_2$, where $\sigma_1$ is a primitive of size $m$ for some $m\ge 2$, and $\sigma_2$ is a 1324-avoiding permutation of size $n-m+1$ with $\sigma_2^{-1}(n-m+1)-\sigma_2^{-1}(1)=k-1$. There are $a_{m,1}$ permutations of the first kind, and $a_{n-m+1,k-1}$ of the latter. This leads to the claimed formula for $a_{n,k}$. Observe that $a_{n-m+1,k-1}$ only makes sense whenever $n-m+1\ge k$, so we need $m\le n-k+1$. 

The formula for $T_{1,k}(x)$ can be shown by induction in $k$. As mentioned above, $T_{1,1}(x) = xf(x)$. Suppose $T_{1,k-1}(x)=xf(x)^{k-1}$. Then,
\begin{align*}
 xT_{1,k} = \sum_{n=k+1}^\infty a_{n,k} x^{n+1} 
 &= \sum_{n=k+1}^\infty \bigg(\sum_{m=2}^{n-k+1} a_{m,1}\cdot a_{n-m+1,k-1}\bigg) x^{n+1} \\
 &= \sum_{n=k+2}^\infty \bigg(\sum_{m=2}^{n-k} a_{m,1}\cdot a_{n-m,k-1}\bigg) x^{n} \\
 &= \bigg(\sum_{m=2}^\infty a_{m,1} x^{m}\bigg) \bigg(\sum_{\ell=k}^\infty a_{\ell,k-1} x^{\ell}\bigg).
\end{align*}
In other words, $xT_{1,k} = T_{1,1}(x)T_{1,k-1}(x)$ and therefore $T_{1,k}(x) = xf(x)^k$, as claimed. 
\end{proof}

\section{Enumeration of $\Av_n^{2 \prec n}(1324)$}
\label{sec:2<n}

In this section, we focus on the set $\Av_{n,k}^{2\prec n}(1324)$ and give formulas for the functions 
\[  T_{2,k}(x)=\sum\limits_{n=k+1}^{\infty}\,\abs{\Av_{n,k}^{2\prec n}(1324)}x^n \;\text{ and }\; g_2(x,t) = \sum\limits_{k=1}^{\infty}t^kT_{2,k}(x) \]
in terms of the functions $f(x)$ and $g_1(x,t)$ from the previous section. Recall that $\Av_{n,k}^{2\prec n}(1324)$ is the set of permutations $\sigma\in \Av_n(1324)$ having (in one-line notation) the $2$ to the left of $n$ at distance $k$, and the 1 to the right of $n$.
Note that if the 1 is removed from such a permutation, the reduced permutation is an element of $\Av_{n-1,k}^{1\prec n-1}(1324)$. This basic observation is used as guideline for most of our combinatorial arguments.

As before, we let $a_{n,k} = \abs{\Av_{n,k}^{1\prec n}(1324)}$, so $g_1(x,t)=\sum\limits_{k=1}^\infty\sum\limits_{n=k+1}^{\infty} a_{n,k}\, t^k x^n $.

\begin{theorem} \label{thm:2<n}
For $n\ge 3$ and $2\le k\le n-1$, we have
\begin{equation} \label{eq:2<nTriangle}
 \abs{\Av_{n,k}^{2\prec n}(1324)} = \tfrac{n-k}{2}a_{n-1,k}.
\end{equation}
Moreover, the corresponding generating function satisfies
\[ g_2(x,t) = \frac12\Big(x^2\frac{\partial g_1}{\partial x}(x,t)-g_1(x,t)^2\Big). \]
\end{theorem}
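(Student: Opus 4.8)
The plan is to establish \eqref{eq:2<nTriangle} by fibering $\Av_{n,k}^{2\prec n}(1324)$ over the already analysed set $\Av_{n-1,k}^{1\prec n-1}(1324)$, and then to read off the formula for $g_2$ by a routine series computation that invokes Theorem~\ref{thm:1<n}. First I would turn the observation quoted just before the theorem into a map: deleting the entry $1$ from $\sigma\in\Av_{n,k}^{2\prec n}(1324)$ and reducing defines $\phi\colon\Av_{n,k}^{2\prec n}(1324)\to\Av_{n-1,k}^{1\prec n-1}(1324)$. It is well defined because the $1$ lies to the right of $n$, so the entries $2$ and $n$ keep their relative positions (hence their distance $k$) and become, after reduction, the $1$ and the $n-1$, while deleting an entry preserves $1324$-avoidance. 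Conversely, every $\sigma$ with $\phi(\sigma)=\tau$ is obtained by raising all entries of $\tau$ by $1$ and inserting a new global minimum into one of the gaps strictly to the right of the entry $n-1$, and for such $\sigma$ the distance condition is automatic. So \eqref{eq:2<nTriangle} amounts to showing that, for each $\tau$, \emph{every} such insertion yields a $1324$-avoider, together with a count of these gaps.

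For legality: a freshly inserted global minimum placed to the right of the maximum can only play the role of the ``$1$'' of a $1324$-pattern, and the remaining three entries would then have to occur to its right in the relative order of the pattern $213$. Hence all insertions are legal precisely when the portion of $\tau$ lying to the right of its maximum avoids $213$. By Corollary~\ref{cor:1<n} and Figure~\ref{fig:k-dotProduct}, that portion is $\tau_k\tau_{k-1}\cdots\tau_1$, a value-decreasing concatenation of blocks whose standardizations each avoid $213$; equivalently, by \eqref{eq:dotProduct} it equals $\widehat{\tau}_2\tau_1$ with $\widehat{\tau}_2$ lying entirely above $\tau_1$, so the claim follows by induction on the number of $\odot$-factors. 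A $213$-pattern cannot straddle two such blocks (its largest entry appears to the right yet must exceed its first entry), so it would have to lie inside a single block, where it is forbidden; thus $\tau_k\cdots\tau_1$ — and every suffix of it — avoids $213$, and all gaps are legal. I expect this structural point to be where the real work lies. Counting is then immediate: with $j=\tau^{-1}(1)$, the entry $n-1$ sits in position $j+k$, so there are $(n-1)-(j+k)+1=n-k-j$ gaps to its right, and $\abs{\phi^{-1}(\tau)}=n-k-j$.

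Summing over the fibers gives $\abs{\Av_{n,k}^{2\prec n}(1324)}=\sum_{\tau}\bigl(n-k-\tau^{-1}(1)\bigr)$. To finish, I would use reverse--complementation: since the reverse--complement of $1324$ is again $1324$, it is an involution of $\Av_{n-1,k}^{1\prec n-1}(1324)$, and it sends a permutation whose $1$ is in position $j$ to one whose $1$ is in position $(n-1)+1-(j+k)=n-k-j$. Therefore $\sum_{\tau}\tau^{-1}(1)=\sum_{\tau}\bigl(n-k-\tau^{-1}(1)\bigr)$; since these two sums add up to $(n-k)\,a_{n-1,k}$, each equals $\tfrac12(n-k)\,a_{n-1,k}$, which is \eqref{eq:2<nTriangle}. (For $k=n-1$ both sides vanish, and the same argument applies verbatim to $k=1$, which is needed below.)

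For the generating function, I would translate \eqref{eq:2<nTriangle} into series. With $m=n-1$,
\[ 2T_{2,k}(x)=\sum_{n}(n-k)\,a_{n-1,k}\,x^{n}=x\sum_{m}(m+1-k)\,a_{m,k}\,x^{m}=x^{2}T_{1,k}'(x)+(1-k)\,x\,T_{1,k}(x). \]
Multiplying by $t^{k}$, summing over $k\ge 1$, and using $\sum_{k}t^{k}T_{1,k}'=\partial_{x}g_1$, $\sum_{k}t^{k}T_{1,k}=g_1$, and $\sum_{k}k\,t^{k}T_{1,k}=t\,\partial_{t}g_1$ gives $2g_2=x^{2}\,\partial_{x}g_1+x\,g_1-x\,t\,\partial_{t}g_1$. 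It then remains to verify the identity $x\,t\,\partial_{t}g_1=x\,g_1+g_1^{2}$, and this is where Theorem~\ref{thm:1<n} enters: from $g_1=xtf/(1-tf)$, equivalently the functional equation $g_1=tf\,(x+g_1)$, differentiating in $t$ (with $f=f(x)$ independent of $t$) gives $\partial_{t}g_1=f\,(x+g_1)^{2}/x$, so $x\,t\,\partial_{t}g_1=tf\,(x+g_1)^{2}=g_1\,(x+g_1)$. (Equivalently, $T_{1,k}=xf^{k}$ yields $T_{1,k_1}T_{1,k_2}=x\,T_{1,k_1+k_2}$, whence $g_1^{2}=x\sum_{k}(k-1)\,t^{k}T_{1,k}=x\,t\,\partial_{t}g_1-x\,g_1$.) Substituting back gives $2g_2=x^{2}\,\partial_{x}g_1-g_1^{2}$, as claimed.
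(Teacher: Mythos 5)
Your proof is correct and follows essentially the same route as the paper: fiber $\Av_{n,k}^{2\prec n}(1324)$ over $\Av_{n-1,k}^{1\prec n-1}(1324)$ by deleting/inserting the entry $1$, use the reverse--complement involution to average the fiber sizes to $(n-k)/2$, and then carry out the same series manipulation via $g_1=xtf/(1-tf)$. The only notable difference is that you explicitly verify that every insertion to the right of the maximum preserves $1324$-avoidance (via $213$-avoidance of the suffix after the maximum, read off from the primitive decomposition), a point the paper's proof takes for granted.
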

\begin{proof}
Let $\A(n,k) = \Av_{n,k}^{1\prec n}(1324)$. For every $\sigma\in\A(n-1,k)$, we let 
\[ i=\sigma^{-1}(1)-1 \;\text{ and }\; j=n-1-\sigma^{-1}(n-1). \] 
Thus, there are $i$ entries to the left of 1, $j$ entries to the right of $n-1$, and $i+j+k+1=n-1$. 

By inserting 1 at any of the the $j+1$ positions to the right of $n-1$, the permutation $\sigma$ gives rise to $j+1$ permutations in $\Av_{n,k}^{2\prec n}(1324)$, and with a similar process, the reverse complement $\sigma^{rc}$ leads to $i+1$ such permutations. In other words, the pair $(\sigma,\sigma^{rc})$ produces a total of $i+1+j+1=n-k$ permutations in $\Av_{n,k}^{2\prec n}(1324)$, but so does the pair $(\sigma^{rc}, \sigma)$. Therefore, as we go over all permutations $\sigma \in \A(n-1,k)$, the above process generates all of the elements of $\Av_{n,k}^{2\prec n}(1324)$ twice. Thus there are $\frac{n-k}{2} a_{n-1,k}$ permutations in $\Av_{n,k}^{2\prec n}(1324)$.

Now, given that $g_1(x,t)=\sum\limits_{k=1}^\infty\sum\limits_{n=k+1}^{\infty} a_{n,k}\, t^k x^n$, formula \eqref{eq:2<nTriangle} implies 
\begin{equation} \label{eq:g2g1}
\begin{aligned} 
g_2(x,t) &= \frac12\bigg(x\frac{\partial}{\partial x}\Big(xg_1(x,t)\Big)-tx\frac{\partial}{\partial t}g_1(x,t)\bigg) \\ 
  &= \frac12\Big(x^2\frac{\partial}{\partial x}g_1(x,t) + xg_1(x,t)-tx\frac{\partial}{\partial t}g_1(x,t)\Big).
\end{aligned}
\end{equation}

Finally, since $g_1(x,t)=\frac{xtf(x)}{1-tf(x)}$ by Theorem~\ref{thm:1<n}, we get $\frac{\partial}{\partial t}g_1(x,t)=\frac{xf(x)}{\left(1-tf(x)\right)^2}$, and it follows that $xg_1(x,t)-tx\frac{\partial}{\partial t}g_1(x,t) = -g_1(x,t)^2$. 
\end{proof}

\medskip
There is a formula for $T_{2, k}(x)$ that seems more suitable for generalizations. We start by letting $T_{2,0}(x)=x^2$ (to account for the permutation $21$, where the 2 is at distance zero from the maximal element). Moreover, since $T_{1,1}(x)=\sum\limits_{n=2}^{\infty}a_{n,1}x^n$ and $\abs{\Av_{n,1}^{2\prec n}(1324)} = \frac{n-1}{2} a_{n-1,1}$ by \eqref{eq:2<nTriangle}, we have 
\begin{equation*}
  T_{2, 1}(x) =\sum\limits_{n=3}^{\infty} \frac{n-1}{2} a_{n-1,1}\,x^n = \frac{1}{2} x^2\frac{d}{dx}T_{1,1}(x).
\end{equation*}
Note that by Theorem~\ref{thm:1<n}, this identity can be written as $T_{2, 1}(x)=\frac12 x^2 \frac{d}{dx}(x f(x))$. 

\begin{theorem} \label{thm:T_2k}
For every $k\ge 2$, we have
\[ T_{2,k}(x) = f(x)^kT_{2,0}(x) + kf(x)^{k-1}\big(T_{2,1}(x) - f(x)T_{2,0}(x)\big). \]
\end{theorem}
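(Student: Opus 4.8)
The plan is to reduce everything to the single series $f(x)$ using the formulas already in hand. First I would use the triangle identity \eqref{eq:2<nTriangle}, $\abs{\Av_{n,k}^{2\prec n}(1324)}=\tfrac{n-k}{2}a_{n-1,k}$, to rewrite $T_{2,k}(x)$ directly in terms of $T_{1,k}(x)$. Shifting the summation index $n\mapsto n-1$ and splitting $n-k=(n)+(1-k)$ — the first part producing a derivative, the second a plain multiple — gives
\[ T_{2,k}(x) = \tfrac12\,x^2\,\tfrac{d}{dx}T_{1,k}(x) + \tfrac{1-k}{2}\,x\,T_{1,k}(x), \]
valid for every $k\ge 1$; in particular $k=1$ recovers the identity $T_{2,1}(x)=\tfrac12 x^2\tfrac{d}{dx}T_{1,1}(x)$ noted just before the theorem.

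Next I would substitute $T_{1,k}(x)=x f(x)^k$ from Theorem~\ref{thm:1<n}. Since $\tfrac{d}{dx}\big(x f(x)^k\big)=f(x)^k+k x f(x)^{k-1}f'(x)$, the display above collapses to
\[ T_{2,k}(x) = \tfrac{2-k}{2}\,x^2 f(x)^k + \tfrac{k}{2}\,x^3 f(x)^{k-1} f'(x). \]
It then remains only to check that the right-hand side of the theorem equals this. Using $T_{2,0}(x)=x^2$ and $T_{2,1}(x)=\tfrac12 x^2\tfrac{d}{dx}(x f(x))=\tfrac12 x^2 f(x)+\tfrac12 x^3 f'(x)$, one computes $T_{2,1}(x)-f(x)T_{2,0}(x)=-\tfrac12 x^2 f(x)+\tfrac12 x^3 f'(x)$, so that
\[ f(x)^k T_{2,0}(x)+k f(x)^{k-1}\big(T_{2,1}(x)-f(x)T_{2,0}(x)\big)= x^2 f(x)^k-\tfrac{k}{2}x^2 f(x)^k+\tfrac{k}{2}x^3 f(x)^{k-1}f'(x), \]
which is exactly the expression above. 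As an alternative to the first step, one can instead start from $g_2(x,t)=\tfrac12\big(x^2\partial_x g_1(x,t)-g_1(x,t)^2\big)$ of Theorem~\ref{thm:2<n}, expand $g_1(x,t)=\sum_{j\ge 1}x f(x)^j t^j$, and read off the coefficient of $t^k$; the bookkeeping with $\tfrac{1}{1-tf}$ and $\tfrac{1}{(1-tf)^2}$ produces the same two terms for $k\ge 2$.

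There is no real obstacle here: the theorem is in essence a coefficient extraction, and the only things to watch are the index shift in the first step and keeping the factors $x\tfrac{d}{dx}$ and $x^2\tfrac{d}{dx}$ straight. The mild point worth flagging — and presumably the reason the statement is phrased this way rather than as the explicit closed form — is that the right-hand side involves only $f$, $T_{2,0}$, and $T_{2,1}$ with no derivative appearing, which is the shape one would hope to see persist for larger values of $a$.
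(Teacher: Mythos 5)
Your computation is correct: the identity $T_{2,k}=\tfrac12 x^2\tfrac{d}{dx}T_{1,k}+\tfrac{1-k}{2}xT_{1,k}$ follows from \eqref{eq:2<nTriangle} by the index shift you describe, substituting $T_{1,k}=xf^k$ gives $\tfrac{2-k}{2}x^2f^k+\tfrac{k}{2}x^3f^{k-1}f'$, and the right-hand side of the theorem simplifies to the same expression. However, this is a genuinely different route from the paper's proof; in fact it is essentially the alternative derivation the authors relegate to the remark immediately following the theorem (``the above formula \dots can also be directly obtained from \eqref{eq:g2g1} together with the fact that $T_{1,k}(x)=xf(x)^k$ and $T_{2,1}(x)=\tfrac12 x^2\tfrac{d}{dx}(xf(x))$''). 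The paper instead proves the statement combinatorially: it splits $\Av_{n,k}^{2\prec n}(1324)$ into the permutations ending with $1$, which are of the form $\sigma_1\ominus 1$ and are counted by $xT_{1,k}=f^kT_{2,0}$, and those not ending with $1$, which it puts in bijection with $k$-tuples of primitives having one marked component drawn from $\A_{2,1}$, yielding the factor $kf^{k-1}\bigl(T_{2,1}-fT_{2,0}\bigr)$. Your argument is shorter and entirely mechanical once Theorems~\ref{thm:1<n} and~\ref{thm:2<n} are in hand, but it leans on the closed form \eqref{eq:2<nTriangle}, which has no known analogue for $a\ge 3$; the paper's tuple-with-marked-component argument is what actually explains the shape $\sum_j\binom{k}{j}f^{k-j}(\cdots)$ and is the template for the conjecture in Section~\ref{sec:final}. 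So your proof establishes the theorem, but it would not by itself suggest or support the generalization that motivates the paper's phrasing --- the very feature you flag at the end.
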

\begin{proof}
Every permutation $\sigma\in\Av_{n,k}^{2\prec n}(1324)$ that ends with $1$ is of the form $\sigma=\sigma_1\ominus 1$, where $\sigma_1\in \Av_{n-1,k}^{1\prec n-1}(1324)$. Thus, this subset (of permutations in $\Av_{n,k}^{2\prec n}(1324)$ ending with 1) is counted by the function $xT_{1,k}(x)$, which by Theorem~\ref{thm:1<n} equals $x\big(xf(x)^k\big) = f(x)^kT_{2,0}(x)$.

Let $\A_{2,1}(n)$ be the set of permutations in $\Av_{n,1}^{2\prec n}(1324)$ not ending with $1$. This set is counted by the generating function $T_{2,1}(x) - f(x)T_{2,0}(x)$ (all permutations in $\Av_{n,1}^{2\prec n}(1324)$ minus those ending with 1). 

We will show that every element $\sigma\in\Av_{n,k}^{2\prec n}(1324)$ with $\sigma(n)>1$ can be built from (and uniquely corresponds to) a $k$-tuple of permutations $(\sigma_1,\dots,\sigma_k)$ such that:
\begin{itemize}
\item $\sigma_\ell\in \A_{2,1}(m_\ell)$ for some $\ell\in\{1,\dots,k\}$, $m_\ell>3$,
\item $\sigma_j\in \Av_{m_j,1}^{1\prec m_j}(1324)$ for every $j\not=\ell$, $m_j\ge 2$,
\item $\abs{\sigma_1}+\cdots+ \abs{\sigma_k}= n+k-1$.
\end{itemize}
The construction goes as follows. Given such a $k$-tuple, identify $\sigma_\ell$, mark the entry adjacent to the right of $1$ (always possible since $\sigma_\ell$ does not end with $1$), and let $\sigma'_\ell$ be the reduced permutation obtained from $\sigma_\ell$ by removing the $1$. For example, given $(12,2413,132)$, we have $\sigma_1=12$, $\sigma_2=241\underline{3}$, $\sigma_3=132$, and so $\sigma'_2=13\underline{2}$.

Let $\sigma' = \sigma_1\odot \cdots \odot \sigma'_\ell \odot \cdots \odot \sigma_k$, and let $i$ be the position of the transformed (via the dot product) marked entry. Note that $\sigma'$ belongs to $\Av_{n-1,k}^{1\prec n-1}(1324)$, and the position of $n-1$ in $\sigma'$ is less than $i$. For the above example, we get
\[ \sigma' = 12\odot 13\underline{2} \odot 132 = 12 \odot 1354\underline{2} = 12465\underline{3}\]

Next, we let $\sigma$ be the permutation obtained by inserting $1$ into $\sigma'$ at position $i$. By our construction, $\sigma\in \Av_{n,k}^{2\prec n}(1324)$ and $\sigma(n)>1$. For example, the tuple $(12,2413,132)$ yields the marked permutation $\sigma'=12465\underline{3}$, and so $\sigma=2357614$. The permutations in $\Av_{7,3}^{2\prec 7}(1324)$ not ending with $1$ are all listed in Table~\ref{tab:(7,3)} together with their corresponding $3$-tuples.

Conversely, given $\sigma\in\Av_{n,k}^{2\prec n}(1324)$ not ending with $1$, mark the entry adjacent to the right of $1$, remove the 1, and decompose the reduced permutation $\sigma'\in\Av_{n-1,k}^{1\prec n-1}(1324)$ as a product of $k$ primitives. Use the factors to make a $k$-tuple. Identify the component with the marked entry, call it $\sigma'_\ell$, and insert 1 at the position of the mark to obtain an element $\sigma_\ell$ of $\A_{2,1}$. The resulting  $k$-tuple is of the form described above.

Finally, since the generating function for $\abs{\A_{2,1}(n)}$ is $T_{2,1}(x) - f(x)T_{2,0}(x)$, the set of $k$-tuples described above, and thus the set of permutations in $\Av_{n,1}^{2\prec n}(1324)$ not ending with $1$, are enumerated by the generating function $kf(x)^{k-1}\big(T_{2,1}(x) - f(x)T_{2,0}(x)\big)$.
\end{proof}

\begin{remark}
It is worth noting that the above formula for $T_{2,k}(x)$ can also be directly obtained from \eqref{eq:g2g1} together with the fact that $T_{1,k}(x)=xf(x)^k$ and $T_{2, 1}(x)=\frac12 x^2 \frac{d}{dx}(x f(x))$.
\end{remark}
\begin{table}[ht]
\small
\begin{tabular}{rl}
(25134, 12, 12) \;$\leadsto$ & 2567134 \\
(12, 25134, 12) \;$\leadsto$ & 2367145 \\
(12, 12, 25134) \;$\leadsto$ & 2347156 \\
(25143, 12, 12) \;$\leadsto$ & 2567143 \\
(12, 25143, 12) \;$\leadsto$ & 2367154 \\
(12, 12, 25143) \;$\leadsto$ & 2347165 \\
(25413, 12, 12) \;$\leadsto$ & 2567413 \\
(12, 25413, 12) \;$\leadsto$ & 2367514 \\
(12, 12, 25413) \;$\leadsto$ & 2347615 \\
(25314, 12, 12) \;$\leadsto$ & 2567314
\end{tabular}
\quad
\begin{tabular}{rl}
(12, 25314, 12) \;$\leadsto$ & 2367415 \\
(12, 12, 25314) \;$\leadsto$ & 2347516 \\
(32514, 12, 12) \;$\leadsto$ & 3256714 \\
(12, 32514, 12) \;$\leadsto$ & 4236715 \\
(12, 12, 32514) \;$\leadsto$ & 5234716 \\
(42513, 12, 12) \;$\leadsto$ & 4256713 \\
(12, 42513, 12) \;$\leadsto$ & 5236714 \\
(12, 12, 42513) \;$\leadsto$ & 6234715 \\
(2413, 132, 12) \;$\leadsto$ & 2467513 \\
(2413, 12, 132) \;$\leadsto$ & 2457613
\end{tabular}
\quad
\begin{tabular}{rl}
(132, 2413, 12) \;$\leadsto$ & 2467153 \\
(12, 2413, 132) \;$\leadsto$ & 2357614 \\
(132, 12, 2413) \;$\leadsto$ & 2457163 \\
(12, 132, 2413) \;$\leadsto$ & 2357164 \\
(2413, 213, 12) \;$\leadsto$ & 5246713 \\
(2413, 12, 213) \;$\leadsto$ & 6245713 \\
(213, 2413, 12) \;$\leadsto$ & 3246715 \\
(12, 2413, 213) \;$\leadsto$ & 6235714 \\
(213, 12, 2413) \;$\leadsto$ & 3245716 \\
(12, 213, 2413) \;$\leadsto$ & 4235716
\end{tabular}
\caption{The 30 elements of $\Av_{7,3}^{2\prec 7}(1324)$ not ending with $1$.} 
\label{tab:(7,3)}
\end{table}

\section{Conjecture and final remarks}
\label{sec:final}

At the beginning of the paper, we introduced the notation
\[ T_{a, k}(x)=\sum\limits_{n=k+1}^{\infty}\abs{\Av_{n,k}^{a\prec n}(1324)}x^n \;\text{ and }\; g_a(x,t) = \sum\limits_{k=1}^{\infty}t^kT_{a,k}(x). \] 

Let $T_{1,0}(x)=x$ and $T_{a,0}(x)=\abs{\Av_{a-1}(1324)}x^a$. These functions account for the permutations of size $a$ that start with $a$. 

Recall that, in one-line notation, a permutation $\sigma\in \Av_{n,k}^{a\prec n}(1324)$ has all of its entries less than $a$ to the right of $n$, and if they are removed, we are left with a reduced $1324$-avoiding permutation of size $n-a+1$ having the 1 to the left of the maximal element. Thus, it is not unreasonable to expect a connection between $T_{a,k}(x)$ and $T_{1,k}(x)$. With that in mind, and based on how we proved Theorem~\ref{thm:T_2k}, we spent some time looking for an expansion of $T_{a,k}(x)$ in terms of powers of $f(x)$ and the functions $T_{a,j}(x)$ for $j=1,\dots,a-1$. Our search lead to the following conjecture.

\begin{conjecture} For $k\ge a$, we have the following equivalent formulas:
\begin{enumerate}[\rm (i)]
\itemindent10pt
\item $\sum\limits_{j=0}^k (-1)^j\binom{k}{j} f(x)^j T_{a,k-j} = 0$.
\item\label{itm:conjecture} 
$T_{a,k}(x)=\sum\limits_{j=0}^{a-1}\binom{k}{j}f(x)^{k-j}\sum\limits_{i=0}^{j} (-1)^{i}\binom{j}{i} f(x)^{i}T_{a,j-i}(x)$.
\item $T_{a,k}(x)=\sum\limits_{j=0}^{a-1} (-1)^{a-j-1}\binom{k}{j}\binom{k-j-1}{a-j-1} f(x)^{k-j}T_{a,j}(x)$.
\end{enumerate}
\end{conjecture}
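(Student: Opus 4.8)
\emph{Reduction to one identity.} The three displayed formulas are equivalent, and the equivalences are purely formal, so I would dispatch them first. Put $D_j(x):=\sum_{i=0}^{j}(-1)^i\binom{j}{i}f(x)^iT_{a,j-i}(x)$ for every $j\ge 0$, so that statement (i) is literally the assertion that $D_k(x)=0$ for all $k\ge a$. The families $(T_{a,j})_{j\ge 0}$ and $(D_j)_{j\ge 0}$ are exchanged by the $f$-weighted binomial transform and its inverse (Newton's forward-difference formula applied to $u_m:=T_{a,m}(x)/f(x)^m$), so $T_{a,k}=\sum_{j\ge 0}\binom{k}{j}f(x)^{\,k-j}D_j(x)$ holds with no hypothesis; once $D_j=0$ for $j\ge a$ this finite sum truncates to formula (\ref{itm:conjecture}), and collecting in (\ref{itm:conjecture}) the coefficient of each $T_{a,j}$ and simplifying with $\binom{k}{j}\binom{j}{m}=\binom{k}{m}\binom{k-m}{j-m}$ and the partial-sum identity $\sum_{s=0}^{p}(-1)^s\binom{N}{s}=(-1)^p\binom{N-1}{p}$ yields formula (iii). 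So the whole problem is to show $D_k(x)=0$ for $k\ge a$.

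\emph{The combinatorial core.} The approach I would take is to upgrade the argument behind Theorem~\ref{thm:T_2k}. The guiding picture is the one recalled just before the conjecture: deleting the entries $1,\dots,a-1$ from $\sigma\in\Av_{n,k}^{a\prec n}(1324)$ and reducing produces $\bar\sigma\in\Av_{n-a+1,k}^{1\prec n-a+1}(1324)$, which by Corollary~\ref{cor:1<n} factors uniquely as a $\odot$-product $\rho_1\odot\cdots\odot\rho_k$ of $k$ primitives. One must then understand precisely how the $a-1$ deleted entries can be returned: a minimal entry inserted at a given position keeps the permutation $1324$-avoiding exactly when the suffix strictly to the right of that position avoids $213$, so the admissible placements of $1,\dots,a-1$ are governed by the $213$-avoiding right parts of the $\rho_i$ (this is the $a$-fold analogue of the ``ends with $1$'' versus ``does not end with $1$'' dichotomy, and of the marked-entry device, used in the proof of Theorem~\ref{thm:T_2k}). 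From this I would extract a canonical \emph{core} of $\sigma$ --- the $\odot$-indecomposable $a$-type permutation formed by the minimal block of consecutive factors $\rho_i$ that the entries $1,\dots,a-1$ genuinely touch --- show that $\sigma$ is recovered uniquely from its core of distance $j$ together with $k-j$ free primitive layers, that there are $\binom{k}{j}$ ways to interleave those layers with the core, and that the generating function for cores of distance $j$ is exactly $D_j(x)$ (via an inclusion--exclusion over how many free primitives can still be split off an $a$-type permutation of distance $j$). Summing $\binom{k}{j}f(x)^{\,k-j}D_j(x)$ over $j$ then reproduces (\ref{itm:conjecture}), equivalently $D_k=0$ for $k\ge a$.

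\emph{Where the difficulty lies.} The heart of the matter is the bound $j\le a-1$ on the distance of the core --- this is exactly why the sum in (\ref{itm:conjecture}) is finite and why $D_k$ vanishes past $k=a-1$. Morally, a free primitive can always be peeled off an $a$-type permutation unless one of the entries $1,\dots,a-1$ obstructs it, and $a-1$ such entries can obstruct at most $a-1$ consecutive layers; converting this into a proof requires a careful case analysis of which interleavings of $1,\dots,a-1$ with the blocks $\rho_1,\dots,\rho_k$ actually avoid $1324$ (the $213$-suffix criterion is the right bookkeeping tool, but several configurations compete, as the region $\theta_2$ in the definition of $\odot$ already shows for a single block), an $(a-1)$-fold refinement of the single marked entry used in the proof of Theorem~\ref{thm:T_2k}, and a verification that the resulting weighted count collapses to the binomial/inclusion--exclusion form above. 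I expect this step --- pinning down ``core'' and proving its distance is at most $a-1$ --- to be the main obstacle; the generating-function manipulations of the first paragraph, and the reassembly of the second once the core is in hand, should be comparatively routine. A natural alternative to pursue in parallel is an induction on $a$ via the map that deletes only the entry $1$, sending $\Av_{n,k}^{a\prec n}(1324)$ into $\Av_{n-1,k}^{(a-1)\prec n-1}(1324)$; for $a=2$ this is transparent because every suffix to the right of the maximum automatically avoids $213$, but for $a\ge 3$ the fibre size becomes ``the number of positions after the maximum whose following suffix avoids $213$'', and gaining control of that statistic is itself the crux.
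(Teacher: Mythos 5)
This statement is an open conjecture in the paper: the authors give no proof of it for general $a$ (they verify it is trivial for $a=1$, that it reduces to Theorem~\ref{thm:T_2k} for $a=2$, and they report it holds for $a=3$), and they explicitly list proving it as future work. Your first paragraph is correct and complete as far as it goes: setting $u_m=T_{a,m}/f^m$, the identity $T_{a,k}=\sum_{j\ge 0}\binom{k}{j}f^{k-j}D_j$ is Newton's forward-difference formula and holds unconditionally, so (i) $\Leftrightarrow$ (ii) follows by truncation in one direction and by induction on $k$ (peeling off $D_a, D_{a+1},\dots$) in the other, and (ii) $\Leftrightarrow$ (iii) is the stated binomial collapse. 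That part is sound, and it correctly isolates the real content of the conjecture as the single assertion $D_k=0$ for $k\ge a$.

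However, the remainder of the proposal does not prove that assertion; it is a research plan, and the gap sits exactly where you yourself locate it. You never define the ``core'' of $\sigma$ precisely, never prove that the entries $1,\dots,a-1$ can obstruct at most $a-1$ consecutive primitive factors (the claim that forces $j\le a-1$ and hence $D_k=0$ for $k\ge a$), never establish that the interleaving count is exactly $\binom{k}{j}$ with no overcounting, and never verify that the generating function for cores of distance $j$ is $D_j$ rather than some other inclusion--exclusion combination. Each of these is a substantive combinatorial claim, and at least the first two are precisely the difficulty that has kept this a conjecture: the $213$-suffix criterion you invoke governs where a single new minimum may be inserted, but with $a-1$ minima the admissible configurations interact (an entry $b<a$ placed inside one factor changes which positions are legal for $b-1$), and it is not established that the resulting obstruction is ``local'' to a block of at most $a-1$ consecutive factors. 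Without those lemmas the argument does not close, so the proposal should be regarded as a plausible strategy for attacking the conjecture, not a proof of it.
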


For $a=1$ the statements are trivial, and for $a=2$, \eqref{itm:conjecture} becomes
\[ T_{2,k}(x) =  f(x)^{k}T_{2,0}(x) + k f(x)^{k-1}\big(T_{2,1}(x) - f(x)T_{2,0}(x)\big), \]
as claimed and proved in Theorem~\ref{thm:T_2k}. 

For $a=3$ the conjectured formula \eqref{itm:conjecture} becomes
\begin{align*}
T_{3,k}(x) &= f(x)^{k}T_{3,0}(x) \\
&\quad+ k f(x)^{k-1}\big(T_{3,1}(x) - f(x)T_{3,0}(x)\big) \\
&\quad+ \tbinom{k}{2} f(x)^{k-2} \big(T_{3,2}(x) - 2f(x)T_{3,1}(x) + f(x)^2T_{3,0}(x)\big),
\end{align*}
where $T_{3,0}(x)=2x^3$ (counting the permutations $312$ and $321$). 
Similar to what we did for $a=2$, we can interpret the term $f(x)^{k}T_{3,0}(x)$ as counting the permutations in $\Av_{n,k}^{3\prec n}(1324)$ that end with $12$ or $21$. Moreover, the function $k f(x)^{k-1}\big(T_{3,1}(x) - f(x)T_{3,0}(x)\big)$ can be interpreted as counting $k$-tuples $(\sigma_1,\dots,\sigma_k)$, where $\sigma_j\in \Av_{m_j,1}^{1\prec m_j}(1324)$, $m_j\ge 2$, and one of these permutations, say $\sigma_\ell$, is marked in such a way that it corresponds to an element of $\Av_{m_\ell,1}^{3\prec m_\ell}(1324)$ that does not end with $12$ or $21$.

\smallskip
Finally, rewriting the third component of $T_{3,k}(x)$ as
\[ \tbinom{k}{2} f(x)^{k-2} \big(T_{3,2}(x) - 2f(x)(T_{3,1}(x) - f(x)T_{3,0}(x)) - f(x)^2T_{3,0}(x)\big), \]
it can be argued that this function counts $k$-tuples $(\sigma_1,\dots,\sigma_k)$ of primitives, where two of them, say $\sigma_{\ell_1}$ and $\sigma_{\ell_2}$, are marked in such a way that the pair $(\sigma_{\ell_1},\sigma_{\ell_2})$ corresponds to a permutation in $\Av_{m,2}^{3\prec m}(1324)$, $m=m_{\ell_1}+m_{\ell_2}+1$, that does not end with $12$ or $21$.

\subsection*{Final remarks}
In this paper, we have introduced a notion of positional statistics that seems particularly suited to and provides a new way to think about 1324-avoiding permutations. As we did for $a=1$ in Theorem~\ref{thm:1<n} and for $a=2$ in Theorem~\ref{thm:2<n}, the ultimate goal is to find an expression for $g_a(x,t)$ in terms of known functions. 

Proving the above conjecture would be a significant step in that direction, but it is not the whole story. For instance, while the conjecture is true for $a=3$, we still need to find $T_{3,1}(x)$ and $T_{3,2}(x)$ in order to have a full expression for $g_3(x,t)$. For an arbitrary $a>3$, our conjecture would reduce the problem to finding $T_{a,j}$ for $j=1,\dots,a-1$.

We have observed interesting properties for several patterns and hope that our work motivates the community to explore  positional statistics for patterns other than $1324$. 

\acknowledgements
We would like to thank the referees for their thorough and thoughtful reports which helped us improve the quality of the paper.


\end{document}